\def\BibTeX{{\rm B\kern-.05em{\sc i\kern-.025em b}\kern-.08em
    T\kern-.1667em\lower.7ex\hbox{E}\kern-.125emX}}
\begin{document}

	\title{Data-driven Distributionally Robust MPC: An indirect feedback approach}
\author{Christoph Mark and Steven Liu
	\thanks{Institute  of  Control  Systems,  Department  of  Electrical  and  Computer
		Engineering, University of Kaiserslautern, Erwin-Schrödinger-Str. 12, 67663
		Kaiserslautern, Germany, {\tt\small mark|sliu@eit.uni-kl.de}}%
}

\maketitle

\newtheorem{theorem}{Theorem}
\newtheorem{corollary}{Corollary}
\newtheorem{lemma}{Lemma}
\newtheorem{definition}{Definition}
\newtheorem{remark}{Remark}
\newtheorem{assum}{Assumption}

\maketitle
\begin{abstract}
	This paper presents a distributionally robust stochastic model predictive control (SMPC) approach for linear discrete-time systems subject to unbounded and correlated additive disturbances. We consider hard input constraints and state chance constraints, which are approximated as distributionally robust (DR) Conditional Value-at-Risk (CVaR) constraints over a Wasserstein ambiguity set. The computational complexity is reduced by resorting to a tube-based MPC scheme with indirect feedback, such that the error scenarios can be sampled offline. Recursive feasibility is guaranteed by softening the CVaR constraint. The approach is demonstrated on a four-room temperature control example.
\end{abstract}


\section{Introduction}
Stochastic MPC is an advanced predictive control strategy that can be roughly classified into analytical approximation methods and randomized methods \cite{farina2016stochastic}. 
On the one hand, analytical approximation methods typically rely on distributional information such as the mean and variance of the disturbance and/or its probability distribution in order to cast the stochastic control problem into a deterministic one \cite{farina2016stochastic}. 
On the other hand, randomized approaches utilize samples/scenarios of the disturbance to approximate the stochastic control problem via a sample-average approximation (SAA). Probabilistic closed-loop guarantees can be established using tools from scenario optimization \cite{hewing2019scenario}. The main benefit of randomized methods lies in the ability to cope with generic disturbances, whereas analytical methods require assumptions on the disturbance.

This paper presents an indirect-feedback distributionally robust SMPC (DR-SMPC) scheme for chance constrained linear systems. By resorting to a scenario-based tube formulation we achieve a reduced online complexity of the MPC optimization problem compared to a classical scenario-based SMPC. Furthermore, the optimization problem is robustified against distributional uncertainty by introducing techniques from Wasserstein Distributionally Robust Optimization (DRO) \cite{kuhn2019wasserstein}. Since chance constraints are generally non-convex, we approximate them as Conditional Value-at-Risk (CVaR) constraints \cite{nemirovski2007convex} and evaluate them under the worst-case distribution of the Wasserstein ambiguity set. In fact, for many control tasks the use of CVaR constraints appears more natural than chance constraints, i.e. chance constraints only penalize the frequency of constraint violations, while the CVaR additionally penalizes the severity of the violation. In addition, the scenario-based tube computation allows for the use of nonlinear tube controllers, e.g. saturated controllers that enable the treatment of hard input constraints.
Lastly, we ensure recursive feasibility by softening the distributionally robust CVaR constraints with slack variables, which was similarly done in \cite{lu2020soft} for the case of affine disturbance feedback policies and moment-based ambiguity sets.

\subsubsection*{Related work}
The authors of \cite{hewing2019scenario} proposed an indirect feedback SMPC approach with scenario-based Probabilistic Reachable Sets (PRS). By decoupling the nominal from the error system, recursive feasibility in case of correlated and unbounded disturbances was achieved. In \cite{MARK20207136}, we recently proposed a similar MPC scheme with distributionally robust PRS. 
Coulson et al. \cite{coulson2020distributionally} presented a data-driven DR-MPC framework for CVaR constrained systems described by data matrix time series. Similar to our approach a Wasserstein ambiguity set was used to robustify the MPC optimization problem against sampling errors. Coppens and Patrinos \cite{Coppens} recently proposed a distributionally robust MPC framework for chance constrained stochastic systems under conic representable ambiguity sets with independent and identically distributed process noise. A similar problem setting was considered by \cite{zhong2021data} for Wasserstein ambiguity sets.
\subsubsection*{Outline}This paper is organized as follows: In Section \ref{sec:preliminaries} we introduce the problem description, the Wasserstein ambiguity set and the distributionally robust cost and chance constraints. Section \ref{sec:DR-Smpc} is dedicated to the indirect feedback SMPC scheme and the tractable approximation of the cost and CVaR constraints. The section ends with our main results on recursive feasibility, closed-loop input and predictive state constraint satisfaction. The paper closes with an example of a four room temperature control task in Section \ref{sec:example}.
\section{Preliminaries}
\subsubsection*{Notation}
 The Pontryagin difference between two polytopic sets $\mathbb{A}$ and $\mathbb{B}$ is given by $\mathbb{A} \ominus \mathbb{B} = \{ a \in \mathbb{A} : a+b \in \mathbb{A}, \forall b \in \mathbb{B} \}$.
Positive definite and semidefinite matrices are indicated as $A>0$ and $A\geq0$, respectively. We denote the set of nonnegative real numbers as $\mathbb{R}_{\geq 0}$. For an event $E$ we define the probability of occurrence as $\mathbb{P}(E)$. A random variable $w$ following a distribution $\mathbb{Q}$ is denoted as $w \sim \mathbb{Q}$, where the expected value w.r.t. $\mathbb{Q}$ is given by $\mathbb{E}_{\mathbb{Q}}(w)$. 
The positive part of a real-valued function $f$ is given by $(f(x))_+ = \max(0, f(x))$.
For two vectors $x,y \in \mathbb{R}^n$ we define the dual norm of a norm $\Vert \cdot \Vert$ as $\Vert x \Vert_* \coloneqq \sup_{\Vert y \Vert \leq 1} x^\top y$. The convex conjugate of a function $f: \mathbb{X} \to \mathbb{R}$ is denoted as $f^*(\theta) \coloneqq \sup_{x \in \mathbb{X}} \theta^\top x - f(x)$. The effective domain of $f$ is defined to be the set $\text{dom}(f) = \{ x \in \mathbb{X} | f(x) < \infty \}$.
The superscript $ \hat{\cdot}$ denotes a data dependent quantity.

\label{sec:preliminaries}
\subsection{Problem description}
We consider discrete linear time-invariant systems of the form
\begin{align}
	x(k+1) = A x(k) + B u(k) + \bar{w}(k) + w(k), \label{eq:dynamic}
\end{align}
where $x \in \mathbb{R}^n$ and $u \in \mathbb{R}^m$ denote the state and input vectors and $A \in \mathbb{R}^{n \times n}$ and $B \in \mathbb{R}^{n \times m}$ are matrices of conformal dimension. The additive disturbance consists of a known part $\bar{w}$ that is bounded in a compact set $\bar{w} \in \bar{\mathcal{W}}$, e.g. the mean, and a stochastic part $w(k)$ that follows an unknown distribution. For the sake of simplicity we assume that the pair $(A,B)$ is controllable and perfect state measurement is available at each time instant $k$.
The system dynamics are subject to individual state chance constraints 
\begin{align}
	\mathbb{P}(h_i^\top x(k) \leq 1) \geq p^i_{x} \quad i \in \{1, \ldots, r\} \label{eq:state_constraints}
\end{align}
and hard input constraints
\begin{align}
	&u(k) \in \mathbb{U}, \label{eq:input_constraints}
\end{align}
where $\mathbb{U} \subseteq \mathbb{R}^m$. Consider a cost function $V(x,u)$, where $V: \mathbb{R}^{n (N_T + 1)} \times \mathbb{R}^{m N_T } \to \mathbb{R}_{\geq 0}$. We aim to solve the following finite horizon stochastic optimal control problem (SOCP) over a task horizon $N_T \in \mathbb{N}$
\begin{subequations}
	\begin{align}
		\!\min_{x,u} & \quad \mathbb{E}_{\mathbb{P}} \bigg \{  V\big(x(0, \ldots, N_T), u(0, \ldots, N_T-1) \big) \bigg \} \label{eq:S_OPC:cost}\\
		\text{s.t.} & \quad x(k+1) = A x(k) + B u(k) + \bar{w}(k) + w(k) \label{eq:S_OPC:dynamic} \\
		& \quad  	\mathbb{P}(h_i^\top x(k) - 1\leq 0) \geq p^i_{x} \quad i \in \{1, \ldots, r\} \label{eq:S_OPC:state_constr}\\
		& \quad u(k) \in \mathbb{U}\\
		& \quad \bar{w}(k) \in \bar{\mathcal{W}}, \quad W = [w(0), \ldots, w(N_T-1)] \sim \mathbb{P} \\
		& \quad x(0) = x_0 \label{eq:S_OPC:init}
	\end{align}
	\label{eq:S_OPC}%
\end{subequations}
for all $k \in \{0, \ldots, N_T-1\}$. 
Unfortunately, problem \eqref{eq:S_OPC} contains several sources of intractability, that is (i) the expectation in \eqref{eq:S_OPC:cost} is taken w.r.t. the true (unknown) probability measure $\mathbb{P}$, (ii) the chance constraints \eqref{eq:S_OPC:state_constr}  must again be evaluated under the true (unknown) measure $\mathbb{P}$ and (iii) optimizing over general control policies $u$ in the presence of possibly unbounded disturbances $w$ yields an infinite dimensional problem.
\subsection{Distributionally Robust Optimization}
In this section we introduce concepts from DRO to reformulate the SOCP, such that the intractability sources (i) and (ii) can be cast into tractable surrogates.

We follow a data-driven approach and assume the existence of a (possibly small) amount of data. Furthermore, we include the case of time correlated disturbances.
\begin{assum}[Distributional assumptions]~ 
	\label{assum:disturbance_set} 
	\begin{enumerate}
		\item The true probability distribution $\mathbb{P}$ is light-tailed.
		\item There exists a dataset $\hat{\mathcal{W}} = \{ \hat{w}_j \}_{j=1}^{\bar{N}}$ that consists of $\bar{N} \in \mathbb{N}$ disturbance trajectories $\hat{w}_j = [\hat{w}_j(0), \ldots, \hat{w}_j(N_T-1)]^\top \sim \mathbb{P}$.
	\end{enumerate}
\end{assum}
A straight forward approach to solve \eqref{eq:S_OPC} is to evaluate \eqref{eq:S_OPC:cost} and \eqref{eq:S_OPC:state_constr} with the empirical measure $\mathbb{\hat{P}} = \delta_{\hat{\mathcal{W}}_{N_s}}$, where $\delta_{\hat{\mathcal{W}}_{N_s}}$ is the Dirac delta measure concentrated on $N_s$ samples of $\hat{\mathcal{W}}$. In other words, we approximate the SOCP with a sample-average approximation, which we denote as the SAA-OCP. The result of the SAA-OCP provides an optimal input sequence $\hat{u}^*(\cdot)$ that minimizes the in-sample performance, i.e. the expected cost in terms of $\hat{\mathbb{P}}$, while the chance constraints are only empirically verified.
If we apply $\hat{u}^*(\cdot)$ to \eqref{eq:dynamic}, then the dynamics are affected by new disturbances $w$ that may not be captured by the dataset $\hat{\mathcal{W}}_{N_s}$. Therefore, the input sequence $\hat{u}^*(\cdot)$ may show a poor out-of-sample performance (expected cost w.r.t. $\mathbb{P}$) and furthermore, the true chance constraints \eqref{eq:S_OPC:state_constr} might be violated.
\begin{remark}
	\label{rem:SSA}
	The optimizer of the SAA-OCP converges almost surely to the optimizer of \eqref{eq:S_OPC} when $N_s$ tends to infinity, whereas for small $N_s$ the SAA control input $\hat{u}^*(\cdot)$ performs poorly when applied to the real system \eqref{eq:dynamic}. Unfortunately, the sample size cannot be chosen arbitrarily large, since the sample complexity of the SAA-OCP increases at least linearly in the sample size $N_s$, which ultimately boils down to a trade-off between accuracy and computational effort \cite{kleywegt2002sample}. This is our main motivation to study distributionally robust SOCPs that allow us to derive meaningful control inputs from a small sample size $N_s$ such that the system states satisfy the chance constraints.
\end{remark}
To robustify the optimization problem against distributional uncertainty we follow \cite{esfahani2018data} and introduce an ambiguity set in terms of the Wasserstein metric defined on the space $\mathcal{M}(\Xi)$, which denotes a set of all probability distributions $\mathbb{Q}$ supported on $\Xi$ with $\mathbb{E}_{\mathbb{Q}} \{ \Vert w \Vert_q \} < \infty$.
\begin{definition}
	\label{def:wasserstein_metric}
	Let $q \in [1, \infty]$. The $q$-Wasserstein metric $d^q_W(\mathbb{Q}_1, \mathbb{Q}_2) : \mathcal{M}(\Xi) \times \mathcal{M}(\Xi) \rightarrow \mathbb{R}_{\geq 0}$ is defined as
	\begin{align*}
		d^q_W(\mathbb{Q}_1, \mathbb{Q}_2) \coloneqq \inf_\Pi \bigg \{  \int_{\Xi^2} \Vert w_1 - w_2 \Vert_q \Pi(d w_1, d w_2) \bigg \},
	\end{align*}
	where $\Pi$ is a joint distribution of $w_1$ and $w_2$ with marginal distributions $\mathbb{Q}_1 \in \mathcal{M}(\Xi)$ and $\mathbb{Q}_2 \in \mathcal{M}(\Xi)$.
\end{definition} 
The Wasserstein metric measures distances between probability distributions by solving an optimal mass transport problem, where the shortest distance is characterized by the optimal transport plan $\Pi$. 
\begin{definition}
	\label{def:wasserstein_ball}
	The Wasserstein ambiguity set centered at the distribution $\mathbb{Q}$ with radius $\epsilon \geq 0$ is given by
	\begin{align*}
		\mathbb{B}_\epsilon(\mathbb{Q}) \coloneqq \{ \mathbb{Q}' \in \mathcal{M}(\Xi) \: | \: d^q_W(\mathbb{Q}, \mathbb{Q}') \leq \epsilon \}.
	\end{align*}
\end{definition}
The stochastic control problem \eqref{eq:S_OPC} can now be robustified against sampling errors for both the cost function \eqref{eq:S_OPC:cost} and the chance constraint \eqref{eq:S_OPC:state_constr} by considering the worst-case distribution over the Wasserstein ball, that is
\begin{align}
	\sup_{\mathbb{Q} \in \mathbb{B}_\epsilon(\mathbb{\hat{P}})} \mathbb{E}_{\mathbb{Q}} \bigg \{  V\big(x(0, \ldots, N_T), u(0, \ldots, N_T-1) \big)  \bigg \} \label{eq:DR_cost_intro}
\end{align}
and
\begin{align}
	\!\inf_{\mathbb{Q} \in \mathbb{B}_\epsilon(\mathbb{\hat{P}})} \mathbb{P}(h_i^\top x(k) \leq 1) \geq p^i_{x}. \label{eq:DR-VaR}
\end{align}
Based on the concentration inequality result \cite[Thm. 3.4]{esfahani2018data}, one can find an optimal Wasserstein radius $\epsilon$ satisfying the following assumption, see also \cite[Thm. 3.5]{esfahani2018data}.
\begin{assum}
	\label{assum:Wasserstein}
	For a given confidence level $\beta \in (0,1)$ and sample size $N_s \leq \bar{N}$ there exists a Wasserstein radius $\epsilon(\beta, N_s)$, such that 
	\begin{align*}
		\mathbb{P}^{N_s}( \mathbb{P} \in \mathbb{B}_\epsilon( \hat{\mathbb{P}} ) ) \geq 1 - \beta. \footnotemark
	\end{align*}
\end{assum}
\footnotetext{$\mathbb{P}^{N_s}$ denotes the $N_s$-fold product distribution.}
The following result provides a tractable reformulation of worst-case expectation problems of type \eqref{eq:DR_cost_intro}.
\begin{lemma}
	\label{lem:convex}
	Assume that $f(\xi)$ is proper, convex and lower semicontinuous, $\xi \in \Xi = \mathbb{R}^n$ and let $q \in [1, \infty]$. Then it holds that
	
	{\small
	\begin{align*}
		\sup_{\mathbb{Q} \in \mathbb{B}_\epsilon(\mathbb{\hat{P}})} \mathbb{E}_{\mathbb{Q}}  \{f(\xi) \} = \inf_{\lambda \geq 0} \lambda \epsilon + \frac{1}{N_s} \sum_{j=1}^{N_s}  \sup_{\xi \in \mathbb{R}^n} \{ f(\xi) - \lambda \Vert \xi - \xi_j \Vert_q \}.
	\end{align*}
	}
\end{lemma}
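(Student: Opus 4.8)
The plan is to read the left-hand side as an infinite-dimensional linear program over probability measures and to dualize the single Wasserstein-budget constraint, following the template of \cite{esfahani2018data}. First I would unfold the definition of the ball $\mathbb{B}_\epsilon(\hat{\mathbb{P}})$ via Definition \ref{def:wasserstein_metric}: because the objective $\mathbb{E}_{\mathbb{Q}}\{f(\xi)\}$ depends only on the first marginal of a coupling, the supremum over $\mathbb{Q}\in\mathbb{B}_\epsilon(\hat{\mathbb{P}})$ can be rewritten as a supremum over couplings $\Pi$ whose second marginal is the empirical measure $\hat{\mathbb{P}}=\tfrac{1}{N_s}\sum_{j=1}^{N_s}\delta_{\xi_j}$. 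Since that marginal is discrete, every such $\Pi$ disintegrates as $\Pi=\tfrac{1}{N_s}\sum_{j}\mathbb{Q}_j\otimes\delta_{\xi_j}$ with conditional measures $\mathbb{Q}_j\in\mathcal{M}(\mathbb{R}^n)$, so the problem collapses to
\begin{align*}
\sup_{\mathbb{Q}_1,\ldots,\mathbb{Q}_{N_s}} \Big\{ \tfrac{1}{N_s}\textstyle\sum_j\int f(\xi)\,\mathbb{Q}_j(d\xi) : \tfrac{1}{N_s}\textstyle\sum_j\int\|\xi-\xi_j\|_q\,\mathbb{Q}_j(d\xi)\le\epsilon \Big\},
\end{align*}
a linear program in the $\mathbb{Q}_j$ with a single scalar coupling constraint.

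Next I would attach a multiplier $\lambda\ge0$ to the budget constraint and form the Lagrangian, giving the dual bound
\begin{align*}
\inf_{\lambda\ge0}\ \lambda\epsilon + \tfrac{1}{N_s}\textstyle\sum_j \sup_{\mathbb{Q}_j}\int\big(f(\xi)-\lambda\|\xi-\xi_j\|_q\big)\,\mathbb{Q}_j(d\xi).
\end{align*}
The inner maximization of a linear functional over all probability measures $\mathbb{Q}_j$ is attained in the limit by point masses, hence equals $\sup_{\xi\in\mathbb{R}^n}\{f(\xi)-\lambda\|\xi-\xi_j\|_q\}$; substituting this identity reproduces exactly the right-hand side. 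Weak duality then delivers the ``$\le$'' inequality immediately.

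The crux is the reverse inequality, i.e.\ the absence of a duality gap between the primal moment LP and its Lagrangian dual. Here I would invoke strong duality for conic/semi-infinite linear programs over measure spaces: the properness, convexity and lower semicontinuity of $f$ ensure that each inner sup-function is a well-posed closed convex object (expressible through the conjugate $f^*$ and the dual norm $\|\cdot\|_*$), while a Slater-type qualification is supplied by the center itself --- whenever $\epsilon>0$ we have $d^q_W(\hat{\mathbb{P}},\hat{\mathbb{P}})=0<\epsilon$, so $\hat{\mathbb{P}}$ is strictly feasible. Under this constraint qualification the infimum over $\lambda$ is attained and equals the primal optimum. I expect this strong-duality justification to be the main obstacle; the remaining steps are routine. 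Note that the light-tail part of Assumption \ref{assum:disturbance_set} plays no role in the reformulation itself --- it is needed only to guarantee a finite radius $\epsilon$ in Assumption \ref{assum:Wasserstein}.
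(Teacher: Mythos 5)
Your argument is correct and is essentially the paper's own: the paper proves this lemma by citing \cite[Thm.~4.2]{esfahani2018data} and noting that the proof ``relies on marginalizing and dualizing the Wasserstein constraint,'' which is exactly the disintegration-over-couplings plus Lagrangian-duality route you spell out. The only caveat is that your Slater-type qualification covers $\epsilon>0$, while the lemma is invoked with $\epsilon\geq 0$; the degenerate case $\epsilon=0$ would need a separate (easy) limiting argument, a point the paper also glosses over.
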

\begin{proof}
	The proof follows from \cite[Theorem 4.2]{esfahani2018data} and relies on marginalizing and dualizing the Wasserstein constraint $\mathbb{Q} \in \mathbb{B}_\epsilon(\mathbb{\hat{P}})$.
\end{proof}
\section{Indirect feedback DR-MPC}
\label{sec:DR-Smpc}
In this section we address the third source of intractability and approximate the stochastic control problem \eqref{eq:S_OPC} in a receding horizon fashion over a prediction horizon $N$ with $N \ll N_T$. We follow an indirect feedback tube-based approach \cite{hewing2020recursively} and split the dynamics \eqref{eq:dynamic} into a nominal and error part, such that $x(k) = z(k) + e(k)$. Analogously, we separate the input $u(k)$ into a nominal part $v(k)$ and an error part $e_u(k) = \pi(e(k))$, where $\pi(\cdot)$ is the tube controller, so that $u(k) = v(k) + \pi(e(k))$. The resulting decoupled closed-loop dynamics are
\begin{align*}
	z(k+1) &= A z(k) + B v(k) + \bar{w}(k)  \\
	e(k+1) &= A e(k) + B \pi(e(k)) + w(k)
\end{align*}
with initial conditions $z(0) = x(0)$ and $e(0) = 0$. To make predictions at time $k$, we define the $t$-step predictive dynamics
\begin{subequations}
\begin{align}
	x(t+1|k) &= A x(t|k) + B u(t|k) + \bar{w}(t|k) + w(t|k) \nonumber\\
	z(t+1|k) &= A z(t|k) + B v(t|k) + \bar{w}(t|k) \label{eq:pred_state_dyn} \\
	e(t+1|k) &= A e(t|k) + B \pi(e(t|k)) + w(t|k), \label{eq:pred_error_dyn} \\
	u(t|k) &= v(t|k) + e_u(t|k) = v(t|k) + \pi(e(t|k)), \label{eq:pred_tube}
\end{align}
\end{subequations}
which are coupled to the closed-loop dynamics with $x(0|k) = x(k)$, $z(0|k) = z(k)$, $e(0|k) = e(k)$ and the known disturbance part satisfies $\bar{w}(t|k) = \bar{w}(k+t)$. The predictive disturbance sequence $W(k) = [w(0|k), \ldots, w(N-1|k)]^\top$ is obtained by conditioning $W$ on all past disturbances, such that 

{\small
\begin{align*}
	\mathbb{P}_{W(k)} = \mathbb{P}\bigg( [w(k), \ldots, w(k+N-1)]^\top \bigg| [w(0), \ldots, w(k-1)]^\top\bigg).
\end{align*}
}
\subsection{Objective function}
\label{sec:objective}
In the following we approximate the cost function \eqref{eq:DR_cost_intro} over a shortened prediction horizon $N$, i.e.
\begin{align}
	\!\sup_{\mathbb{Q} \in \mathbb{B}_\epsilon(\mathbb{\hat{P}}_{W(k)})} \mathbb{E}_{\mathbb{Q}} \bigg \{ V_f(x(N|k)) + l_1(x(\cdot|k)) + l_2(u(\cdot|k)) \bigg \}, \label{eq:dr_cost}
\end{align}
where $V_f: \mathbb{R}^n\to \mathbb{R}$ is the terminal cost function that approximates the finite horizon tail for $t = \{N, \ldots, N_T\}$, $l_1: \mathbb{R}^{n N} \to \mathbb{R}_{\geq 0}$ denotes the state and $l_2: \mathbb{R}^{m N} \to \mathbb{R}_{\geq 0}$ the input cost function. Moreover, $\mathbb{\hat{P}}_{W(k)}$ is the empirical predictive distribution, which approximates the unknown conditional predictive distribution $\mathbb{P}_{W(k)}$
with disturbance sequences obtained from $\hat{\mathcal{W}}$ (Assumption \ref{assum:disturbance_set}).
To this end, we select $N_s$ trajectories from $\hat{\mathcal{W}}$ starting at time $k$, such that
\begin{align*}
	\hat{\mathcal{W}}_{N_s} = \big \{ [\hat{w}_j(k), \ldots, \hat{w}_j(k + N - 1)]^\top \big \}_{j=1}^{N_s}
\end{align*}
where the empirical distribution is given by $\mathbb{\hat{P}}_{W(k)} = \delta_{\hat{\mathcal{W}}_{N_s}}$.

\subsubsection{Nonlinear tube controllers} 
\label{sec:nonlinear_tube}
A nonlinear tube controller $\pi(\cdot)$ does not allow for an explicit representation of the error \eqref{eq:pred_error_dyn} and input error \eqref{eq:pred_tube} as affine functions of $w$, i.e.
\begin{align*}
	{e}(t|k) &= A^{t} e(0|k) + \sum_{i = 0}^{t-1} A^{t-1-i} [{e}_{u}(i|k) + {w}(k + i)]\\
	{e}_{u}(t|k) &= \pi({e}(t|k)).
\end{align*}
Due to the non-convex mapping between ${e}_{u}$ and $w$, it is therefore not possible to represent the worst-case expectation \eqref{eq:dr_cost} together with Lemma \ref{lem:convex} as a tractable convex optimization problem.
Thus, we approximate the expected value in \eqref{eq:dr_cost} with the empirical predictive distribution, resulting in the sample-average approximation
\begin{align}
	\frac{1}{N_s} \sum_{j=1}^{N_s}  \bigg \{ V_f( \hat{x}_j(N|k)) + l_1(\hat{x}_j(\cdot|k)) + l_2(\hat{u}_j(\cdot|k)) \bigg \}, \label{eq:SAA_cost}
\end{align}
where $\hat{x}_j(t|k) = z(t|k) + \hat{e}_j(t|k)$ and $\hat{u}_j(t|k) = v(t|k) + \hat{e}_{j,u}(t|k)$ are the data dependent states and inputs resulting from $\hat{w}_j(t|k)$.
\subsubsection{Linear tube controllers}
As a special case we investigate linear tube controllers of the form $\pi(e) = K e$, which enable us to write the error and input error explicitly as
\begin{align}
	{e}(t|k) &= A_K^{t} e(0|k) + \sum_{i = 0}^{t-1} A_K^{t-1-i} {w}(k+i) \label{eq:explicit_error} \\
	{e}_{u}(t|k) &= K A_K^{t} e(0|k) + \sum_{i = 0}^{t-1} K A_K^{t-1-i} {w}(k+i), \label{eq:explicit_input_error}
\end{align}
where $A_K = A + B K$. The disturbance sequence $W(k)$ is similar to the nonlinear case obtained from the conditional predictive distribution $\mathbb{P}_{W(k)}$. As it can be seen from \eqref{eq:explicit_error}-\eqref{eq:explicit_input_error}, the error and input error are both affine functions of the disturbance ${w}$, which allows us to state the following result.
\begin{lemma}
	\label{lem:cost}
	Let the tube controller be a linear map $\pi(e) = K e$, let the functions $V_f,l_1$ and $l_2$ be proper, convex and Lipschitz continuous w.r.t. the $q$-norm and let $\Xi = \mathbb{R}^{nN}$.
	Then, for any $\epsilon \geq 0$ the distributionally robust cost \eqref{eq:dr_cost} and the SAA cost \eqref{eq:SAA_cost} share the same minimizer $(z^*, v^*)$.
\end{lemma}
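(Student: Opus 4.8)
The plan is to show that, under the linear tube law, the worst-case expectation \eqref{eq:dr_cost} collapses to the empirical average \eqref{eq:SAA_cost} plus a constant that is independent of the decision variables $(z,v)$, so that both objectives are minimized at the same point.

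First I would collect the state and input predictions into a single loss evaluated on the stacked disturbance $\xi = W(k) \in \mathbb{R}^{nN}$. From \eqref{eq:explicit_error}--\eqref{eq:explicit_input_error} together with $x(t|k) = z(t|k) + e(t|k)$ and $u(t|k) = v(t|k) + e_u(t|k)$, every predicted state and input is an affine function of $\xi$ whose \emph{linear} part is built solely from $A_K$ and $K$, while the \emph{offset} part carries the whole dependence on $(z,v)$. Defining
\[
	f(\xi) \coloneqq V_f(x(N|k)) + l_1(x(\cdot|k)) + l_2(u(\cdot|k)),
\]
the composition of the proper convex Lipschitz functions $V_f,l_1,l_2$ with these affine maps is itself proper, convex and Lipschitz continuous w.r.t. the $q$-norm on $\Xi=\mathbb{R}^{nN}$. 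Crucially, its Lipschitz modulus $L$ is governed only by the Lipschitz constants of $V_f,l_1,l_2$ and the operator norms of the fixed matrices $A_K,K$, and is therefore independent of $(z,v)$, which merely shifts the offset.

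Next I would invoke Lemma \ref{lem:convex} and evaluate the inner supremum $s_j(\lambda)\coloneqq \sup_{\xi}\{f(\xi)-\lambda\Vert\xi-\xi_j\Vert_q\}$ sample by sample. The Lipschitz bound $f(\xi)-f(\xi_j)\leq L\Vert\xi-\xi_j\Vert_q$ gives $f(\xi)-\lambda\Vert\xi-\xi_j\Vert_q \leq f(\xi_j)+(L-\lambda)\Vert\xi-\xi_j\Vert_q$, so for every $\lambda\geq L$ the bracket is maximized at $\xi=\xi_j$ and $s_j(\lambda)=f(\xi_j)$. Conversely, for $\lambda<L$ convexity forces $s_j(\lambda)=+\infty$: since $L$ is the tight Lipschitz constant there is a direction $\eta$ with $\Vert\eta\Vert_q=1$ along which the asymptotic slope of $f$ equals $L$, and moving $\xi=\xi_j+t\eta$ with $t\to\infty$ yields $f(\xi)-\lambda t\geq f(\xi_j)+(L-\lambda)t\to+\infty$. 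Substituting back, the outer minimization reduces to $\inf_{\lambda\geq L}\{\lambda\epsilon+\tfrac{1}{N_s}\sum_{j=1}^{N_s}f(\xi_j)\}$, which for any $\epsilon\geq 0$ is attained at $\lambda^\star=L$ and equals $L\epsilon+\tfrac{1}{N_s}\sum_{j=1}^{N_s}f(\xi_j)$. Recognizing the sum as the SAA cost, I conclude that the distributionally robust cost \eqref{eq:dr_cost} equals $L\epsilon$ plus the SAA cost \eqref{eq:SAA_cost}. Since $L\epsilon$ is a constant independent of $(z,v)$, the two objectives differ only by an additive constant and hence share the same minimizer $(z^\star,v^\star)$.

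The main obstacle I anticipate is the careful handling of the inner supremum. Showing it is finite precisely for $\lambda\geq L$ relies only on the Lipschitz bound, but the blow-up for $\lambda<L$---which is what pins the optimal multiplier at exactly $L$ and thereby turns the additive gap into a genuine constant rather than something that could itself depend on $(z,v)$ through a $(z,v)$-dependent optimal $\lambda$---requires the convexity assumption via the asymptotic-slope argument. A secondary point to verify cleanly is that the stacking in the first step really leaves the Lipschitz modulus invariant under $(z,v)$; this is immediate once one notes that an additive offset in the argument of a Lipschitz function never alters its Lipschitz constant.
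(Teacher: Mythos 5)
Your proof is correct and rests on the same structural observation as the paper's: under $\pi(e)=Ke$ the predicted states and inputs are affine in the stacked disturbance $\xi=W(k)$, with $(z,v)$ entering only through the offset, so the worst-case expectation equals the empirical average plus an $\epsilon$-proportional constant that does not move the minimizer. Where you diverge is in how you establish that identity: the paper invokes \cite[Thm.~6.3]{esfahani2018data} directly, obtaining $\kappa\epsilon+\tfrac{1}{N_s}\sum_j\Phi(\hat W_j(k))$ with the steepness parameter $\kappa=\sup\{\Vert\psi\Vert_{q,*}:\Phi^*(\psi)<\infty\}$, and then bounds $\kappa$ by the Lipschitz constant via \cite[Prop.~6.5]{esfahani2018data}; you instead start from Lemma~\ref{lem:convex} and evaluate the inner supremum $\sup_\xi\{f(\xi)-\lambda\Vert\xi-\xi_j\Vert_q\}$ by hand, showing it equals $f(\xi_j)$ for $\lambda\geq L$ via the Lipschitz bound and blows up to $+\infty$ for $\lambda<L$ via the convex asymptotic-slope argument, which pins $\lambda^\star=L$. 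In effect you re-derive the special case of Theorem~6.3 that the paper cites; this buys a self-contained argument that makes explicit why convexity (and not Lipschitz continuity alone) is needed to identify the optimal multiplier, at the cost of a few extra lines. One point both your proof and the paper's treat tersely: the conclusion needs the \emph{exact} constant multiplying $\epsilon$ (your $L$, the paper's $\kappa$) to be independent of $(z,v)$, not merely bounded by a $(z,v)$-independent quantity. Since $(z,v)$ only translates the offset $b$ of the affine map $\xi\mapsto M\xi+b$ feeding $V_f,l_1,l_2$, this holds whenever the translation stays within the range of $M$ (which is the case here, as the disturbance-to-error map is block lower triangular with identity diagonal blocks); a sentence to that effect would close the only gap in an otherwise sound argument.
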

\begin{remark}
	Lemma \ref{lem:cost} implies that if we implement an MPC with the distributionally robust cost \eqref{eq:dr_cost} and an MPC with the SAA cost \eqref{eq:SAA_cost}, the out-of-sample performance will be equivalent, which was similarly mentioned in \cite[Remark 6.7]{esfahani2018data} for the case of static optimization problems.
\end{remark}
\subsection{DR-CVaR constraints}
Chance constraints, also known as Value-at-Risk (VaR) constraints, render the feasible set of the MPC optimization problem in general non-convex. Since this is an undesirable property, we relax the VaR with the CVaR, which is a coherent risk measure and serves as a convex relaxation of the VaR \cite{nemirovski2007convex}.

Let $\gamma_i(x(t|k)) = h_i^\top x(t|k) - 1$ be an affine loss function that describes the $i$-th halfspace constraint \eqref{eq:S_OPC:state_constr} at time step $k$ predicted $t$-steps ahead. We define the CVaR as
\begin{multline*}
	\text{CVaR}_{p_{x}^i}^{\mathbb{{P}}_{w(t|k)}} \big( \gamma_i(x(t|k)) \big) \\
	\coloneqq \!\inf_{\tau_{i,t} \in \mathbb{R}} \bigg( \tau_{i,t} + \frac{1}{1 - p_{x}^i} \mathbb{E}_{\mathbb{{P}}_{w(t|k)}} \big \{ (\gamma_i( x(t|k) ) - \tau_{i,t} )_+ \big \}  \bigg),
\end{multline*}
which requires the unknown $t$-step conditional predictive distribution
\begin{align*}
	\mathbb{P}_{w(t|k)} = \mathbb{P}\bigg( w(t+k)^\top \bigg| [w(0), \ldots, w(k+t-1)]^\top\bigg).
\end{align*}
Hence, we introduce its distributionally robust counterpart by maximizing the expected value over the Wasserstein ball. To this end, we substitute $x = z + e$ and formulate the ambiguity set by means of the empirical $t$-step predictive error distribution $\mathbb{\hat{P}}_{e(t|k)}$, leading to the distributionally robust CVaR constraint 
\begin{align}
	\label{eq:DR-CVaR}
	\!\sup_{\mathbb{Q} \in \mathbb{B}_\epsilon(\mathbb{\hat{P}}_{e(t|k)})} \text{CVaR}_{p_{x}^i}^{\mathbb{Q}} \big (\gamma_i( z(t|k) + e(t|k)) \big) \leq 0.
\end{align}
The distributionally robust constraint set is then given by

{\small
\begin{align*}
	\mathbb{X}_{\text{CVaR}} \coloneqq 
	\left\{  \bar{z} \  \middle\vert \begin{array}{l}
		\displaystyle \! \!\sup_{\mathbb{Q} \in \mathbb{B}_\epsilon(\mathbb{\hat{P}}_{e(t|k)})}  \text{CVaR}_{p_{x}^i}^{\mathbb{Q}} \big(\gamma_i( z(t|k) + e(t|k)) \big) \leq 0\\
		\forall i = \{1, \ldots, r\}, \forall t \in \{0, \ldots, N-1\}
	\end{array}\right\},
\end{align*}
}
where $\bar{z} = [ z(0|k), \ldots, z(N-1|k)]^\top \in \mathbb{R}^{nN}$. In the following we derive a tractable approximation of $\mathbb{X}_{\text{CVaR}}$ as it contains several infinite dimensional optimization problems.
\begin{lemma}
	\label{lem:cvar}
	Let $\alpha_i = 1 - p_x^i \in (0,1)$ and define $p,q \geq 1$, such that the norm equivalence $1/p + 1/q = 1$ holds, then 
	\begin{align}
		\mathbb{Z} \coloneqq &\left\{  \bar{z} \  \middle\vert \begin{array}{l}
			\exists \tau_{i,t} \in \mathbb{R}, \lambda_{i,t} \in \mathbb{R}_{\geq 0}, s_{i,j,t} \in \mathbb{R}_{\geq 0} \:  \text{s.t.} \\
			-\alpha_i \tau_{i,t} + \epsilon \lambda_{i,t} + \frac{1}{N_s} \sum_{j=1}^{N_s} s_{i,j,t} \leq 0\\
			(\gamma_i(z(t|k) + \hat{e}_j(t|k)) + \tau_{i,t})_+ \leq s_{i,j,t}  \\
			\Vert h_i^\top \Vert_p \leq \lambda_{i,t} \\
			\forall j \in \{1, \ldots, N_s\} \: \forall i \in \{1, \ldots, r\} \\
			\forall t \in \{0, \ldots, N-1\}
		\end{array}\right\} \label{eq:cvar_set_wasserstein} \subseteq \mathbb{X}_{\text{CVaR}} 
	\end{align}
\end{lemma}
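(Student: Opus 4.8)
The plan is to fix a single constraint index $i\in\{1,\dots,r\}$ and a prediction step $t\in\{0,\dots,N-1\}$ and show that the algebraic conditions defining $\mathbb{Z}$ certify the one DR-CVaR inequality in $\mathbb{X}_{\text{CVaR}}$; the inclusion $\mathbb{Z}\subseteq\mathbb{X}_{\text{CVaR}}$ then follows by intersecting over all $i$ and $t$. Writing $\gamma_i$ short for $\gamma_i(z(t|k)+e(t|k))$, I would insert the infimum definition of the CVaR and invoke the max--min inequality to obtain
\[
\sup_{\mathbb{Q}\in\mathbb{B}_\epsilon(\hat{\mathbb{P}}_{e(t|k)})}\inf_{\tau}\Big(\tau+\tfrac{1}{\alpha_i}\mathbb{E}_{\mathbb{Q}}\{(\gamma_i-\tau)_+\}\Big)\le \inf_{\tau}\sup_{\mathbb{Q}}\Big(\tau+\tfrac{1}{\alpha_i}\mathbb{E}_{\mathbb{Q}}\{(\gamma_i-\tau)_+\}\Big).
\]
Since the CVaR is an infimum over $\tau$, it suffices to exhibit a \emph{single} $\tau$ (equivalently, after the sign substitution $\tau\mapsto-\tau_{i,t}$ built into $\mathbb{Z}$) for which the inner worst-case value is nonpositive. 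This is precisely why the certificate is stated with existential quantifiers, and why only the inclusion $\subseteq$, rather than equality, is claimed.

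For fixed $\tau$, multiplying by $\alpha_i>0$ reduces the task to evaluating the worst-case expectation $\sup_{\mathbb{Q}}\mathbb{E}_{\mathbb{Q}}\{(\gamma_i-\tau)_+\}$. The integrand $f(e)=(h_i^\top(z(t|k)+e)-1-\tau)_+$ is proper, convex and lower semicontinuous on $\Xi=\mathbb{R}^{n}$, so Lemma~\ref{lem:convex} applies and gives $\sup_{\mathbb{Q}}\mathbb{E}_{\mathbb{Q}}\{f(e)\}=\inf_{\lambda\ge0}\lambda\epsilon+\frac{1}{N_s}\sum_{j=1}^{N_s}\sup_{e}\{f(e)-\lambda\Vert e-\hat{e}_j(t|k)\Vert_q\}$.

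The crux of the argument is the inner supremum $\sup_{e}\{(h_i^\top(z+e)-1-\tau)_+-\lambda\Vert e-\hat{e}_j\Vert_q\}$. I would split the positive part into its two affine branches and use the dual-norm identity $\sup_{e}\{a^\top e-\lambda\Vert e\Vert_q\}=0$ when $\Vert a\Vert_p\le\lambda$ and $+\infty$ otherwise, with $1/p+1/q=1$. After shifting by $\hat{e}_j$, the linear branch contributes $h_i^\top(z+\hat{e}_j)-1-\tau$ precisely when $\Vert h_i^\top\Vert_p\le\lambda$ and is unbounded otherwise, while the zero branch contributes $0$; taking the maximum of the two collapses the inner sup back to $(\gamma_i(z+\hat{e}_j)-\tau)_+$ on the cone $\Vert h_i^\top\Vert_p\le\lambda$ and leaves it $+\infty$ off this cone. \textbf{This is the step I expect to be the main obstacle}: one must verify attainment at $e=\hat{e}_j$, handle the unbounded branch carefully so that finiteness \emph{forces} $\lambda\ge\Vert h_i^\top\Vert_p$, and confirm that the maximum of the two branch values is again a single positive-part expression.

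Substituting this value back turns the worst-case expectation into $\inf_{\lambda\ge\Vert h_i^\top\Vert_p}\lambda\epsilon+\frac{1}{N_s}\sum_{j}(\gamma_i(z+\hat{e}_j)-\tau)_+$. Undoing the substitution $\tau=-\tau_{i,t}$, the per-constraint requirement $\alpha_i\tau+\sup_{\mathbb{Q}}\mathbb{E}_{\mathbb{Q}}\{(\gamma_i-\tau)_+\}\le0$ reads $-\alpha_i\tau_{i,t}+\lambda\epsilon+\frac{1}{N_s}\sum_{j}(\gamma_i(z+\hat{e}_j)+\tau_{i,t})_+\le0$ for some $\lambda\ge\Vert h_i^\top\Vert_p$. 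Finally, introducing epigraph variables $s_{i,j,t}\ge(\gamma_i(z+\hat{e}_j)+\tau_{i,t})_+$---admissible because the objective is nondecreasing in each $s_{i,j,t}$ and in $\lambda$---recovers exactly the three inequalities that define $\mathbb{Z}$. Collecting these certificates over all $i$ and $t$ shows that every $\bar{z}\in\mathbb{Z}$ satisfies each DR-CVaR inequality, hence $\mathbb{Z}\subseteq\mathbb{X}_{\text{CVaR}}$.
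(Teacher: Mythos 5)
Your proposal is correct and follows essentially the same route as the paper's proof: insert the CVaR infimum, apply the max--min inequality, invoke Lemma~\ref{lem:convex} on the positive-part integrand, resolve the inner supremum via the dual norm (which forces $\Vert h_i^\top\Vert_p\le\lambda_{i,t}$), and pass to epigraph variables, with the existential quantifier justified because a single feasible $\tau_{i,t}$ upper-bounds the infimum. Your max-of-two-branches treatment of the inner supremum is a slightly cleaner packaging of the paper's case distinction, but it is the same argument.
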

\begin{remark}
	The DR-CVaR constraint penalizes the worst-case expected constraint violation above the $1 - p_x^i$-th quantile of $\gamma_i(x)$. Hence, \eqref{eq:DR-CVaR} is a sufficient condition for \eqref{eq:DR-VaR} to hold.
\end{remark}
\begin{remark}	
	For linear tube controllers we already know from \eqref{eq:explicit_error}-\eqref{eq:explicit_input_error} that the error and input error can be written as affine functions of the disturbance $w$. 
	Thus, we could substitute \eqref{eq:explicit_error} into \eqref{eq:cvar_set_wasserstein} and center the Wasserstein ball at the $t$-step empirical predictive distribution $\mathbb{\hat{P}}_{w(t|k)}$ instead of $\mathbb{\hat{P}}_{e(t|k)}$. Then we could simultaneously ensure DR-CVaR constraints (Lemma \ref{lem:cvar}) and distributionally robust performance (Lemma \ref{lem:cost}).
\end{remark}
\subsection{Hard input constraints}
To ensure hard input constraints in case of unbounded disturbances, we limit the control authority of the tube controller $\pi$, e.g. via a saturated LQR \cite{hu2002analysis}. To this end, we make the following assumption
\begin{assum}
	\label{assum:input}
	The tube controller $\pi$ satisfies
	\begin{align*}
		\pi(e) \in \mathcal{E}_u \subset \mathbb{U} \: \forall e \in \mathbb{R}^n.
	\end{align*}
\end{assum}
Similar to robust tube-based MPC we tighten the original input constraints, i.e. $\mathbb{V} = \mathbb{U} \ominus \mathcal{E}_u$, where $\mathbb{V}$ denotes the nominal input constraint set.
\begin{remark}
	Note that a saturated LQR belongs to the class of nonlinear tube controllers. Thus, if we want to ensure hard input constraints (Assumption \ref{assum:input}), we cannot achieve distributionally robust performance (Section \ref{sec:nonlinear_tube}), and conversely, if we want to ensure distributionally robust performance, we need a linear tube controller, which, however, does not satisfy Assumption \ref{assum:input} because of the unbounded disturbance $w$.
\end{remark}
\subsection{Recursive feasiblity}
Following the indirect feedback paradigm, we initialize the nominal states and prediction errors with $z(0|k) = z(k)$ and $\hat{e}_j(0|k) = e(k)$ for all $j \in \{1, \ldots, N_s\}$. However, since $e(k)$ is a deterministic variable, the first step constraint at time $t = 0$ may become infeasible. In fact, since we do not assume any bound on the disturbance, it is impossible to guarantee robust recursive feasibility. 

To this end, we introduce a vector $\Theta = [\theta_0, \ldots, \theta_{N-1}]$ consisting of slack variables $\theta_t \geq 0$ for all $t \in \{0, \ldots, N-1\}$ to soften the CVaR constraint set \eqref{eq:cvar_set_wasserstein}, i.e.
\begin{align*}
	\mathbb{Z}_\Theta \coloneqq &\left\{  \bar{z} \  \middle\vert \begin{array}{l}
		\exists \tau_{i,t} \in \mathbb{R}, \lambda_{i,t} \in \mathbb{R}_{\geq 0}, s_{i,j,t} \in \mathbb{R}_{\geq 0} \:  \text{s.t.} \\
		-\alpha_i \tau_{i,t} + \epsilon \lambda_{i,t} + \frac{1}{N_s} \sum_{j=1}^{N_s} s_{i,j,t} \leq \theta_t\\
		(\gamma_i(z(t|k) + \hat{e}_j(t|k)) + \tau_{i,t})_+ \leq s_{i,j,t}  \\
		\Vert h_i^\top \Vert_p \leq \lambda_{i,t} \\
		\forall j \in \{1, \ldots, N_s\} \: \forall i \in \{1, \ldots, r\} \\
		\forall t \in \{0, \ldots, N-1\}
	\end{array}\right\}
\end{align*}
and define an exact penalty function $l_\Theta(\Theta) = c \Vert \Theta \Vert_\infty$ with a sufficiently large penalty weight $c > 0$, see \cite[Thm. 1]{kerrigan2000soft} for details.
In order to ensure stability of the control scheme, we state the following assumption.
\begin{assum}
	\label{assum:terminal}
	There exists a robust positive invariant terminal set $\mathbb{Z}_f$ with respect to $\bar{w} \in \bar{\mathcal{W}}$ under the local controller $\pi_f(z) \in \mathbb{V}$ for all $z \in \mathbb{Z}_f$, i.e.
	\begin{align*}
		\forall z \in \mathbb{Z}_f \Rightarrow Az + B \pi_f(z) + \bar{w} \in \mathbb{Z}_f
	\end{align*}
	and furthermore
	\begin{align*}
		\forall z \in \mathbb{Z}_f : \!\sup_{\mathbb{Q} \in \mathbb{B}_\epsilon(\mathbb{\hat{P}})} \text{CVaR}_{p_{x}^i}^{\mathbb{Q}} \big(\gamma_i(z+ \hat{e}_j(k)) \big) \leq 0
	\end{align*}
	for all $i \in \{1, \ldots, r\}, j \in \{1, \ldots, \bar{N} \}, k \in \{0, \ldots, N_T \}$.
\end{assum}
\begin{remark}
	A set satisfying Assumption \ref{assum:terminal} can be found with methods proposed in \cite{MARK20207136}, i.e. the DR-CVaR can be expressed as a Distributionally Robust PRS, whereas the robust positive invariance w.r.t. $\bar{\mathcal{W}}$ can be established using standard procedures from robust MPC \cite{mayne2005robust}.
\end{remark}
\subsection{Tractable MPC optimization problem}

At each time step $k \geq 0$ we solve the following DR-MPC optimization problem
\begin{align}
	\!\min_{z,v, \tau, \lambda, \Theta} &  \quad l_\Theta(\Theta) + \frac{1}{N_s} \sum_{j=1}^{N_s} V_j(\hat{x}_j, \hat{u}_j) \label{eq:DR_MPC} \\
	\text{s.t.} & \quad \begin{aligned}
		&\hat{x}_j(t+1|k) &&= z(t+1|k) + \hat{e}_j(t+1|k) \\
		&\hat{u}_j(t|k) &&= v(t|k) + \pi( \hat{e}_j(t|k) ) \\
		&\hat{e}_j(t+1|k) &&= A \hat{e}_j(t|k) + B \pi(\hat{e}_j(t|k)) + \hat{w}_j(t|k) \\
		&z(t+1|k) &&= A z(t|k) + B v(t|k) + \bar{w}(t|k) \\
	\end{aligned} \nonumber \\
	& \quad \scalebox{0.978}{$[z(0|k), \ldots z(N|k)] \in \mathbb{Z}_\Theta \times \mathbb{Z}_f, \quad \theta_t \geq 0$} \nonumber \\
	& \quad  v(t|k) \in \mathbb{V} \nonumber\\
	& \quad z(0|k) = z(k), \: \hat{x}_j(0|k) = x(k), \: \hat{e}_j(0|k) = e(k) \nonumber 
\end{align}
for all $t \in \{0, \ldots, N-1\}$ and for all $j \in \{1, \ldots, N_s\}$, where {\small$V_j(\hat{x}_j, \hat{u}_j) = V_f( \hat{x}_j(N|k)) + l_1(\hat{x}_j(\cdot|k)) + l_2(\hat{u}_j(\cdot|k))$}. The control input applied to system \eqref{eq:dynamic} is obtained by getting the first element of the minimizer in \eqref{eq:DR_MPC}, i.e. $v^*(0|k)$ in conjunction with the tube controller
\begin{align}
	u(k) = v^*(0|k) + \pi(e(k)). \label{eq:control_input}
\end{align}
Problem \eqref{eq:DR_MPC} defines a set of feasible control sequences {\small$\mathcal{V}_N(z(k), \Theta) = \{v(\cdot|k), \Theta \: | z(0|k) = z(k), z(\cdot|k) \in \mathbb{Z}_{\Theta} \times \mathbb{Z}_f, v(\cdot|k) \in \mathbb{V} \times \cdots \times \mathbb{V} \}$}, feasible initial states {\small$\mathbb{Z}_N = \{z \: | \: \exists \Theta :\mathcal{V}_N(z, \Theta) \not= \emptyset \}$} and strictly feasible initial states {\small$\mathbb{Z}_N^s = \{z \: | \: \Vert \Theta \Vert = 0, \mathcal{V}_N(z, \Theta) \not= \emptyset \}$}. The strictly feasible initial states correspond to the hard constrained set \eqref{eq:cvar_set_wasserstein}. 
\begin{theorem}
	\label{thm:feasible}
	Let Assumptions \ref{assum:disturbance_set}, \ref{assum:Wasserstein}, \ref{assum:input}, \ref{assum:terminal}, hold and consider system \eqref{eq:dynamic} under control law \eqref{eq:control_input} resulting from \eqref{eq:DR_MPC}. 
	If $x(0) \in \mathbb{Z}_N$, then
	\begin{enumerate}
		\item The MPC optimization problem \eqref{eq:DR_MPC} is recursively feasible for all $0 \leq k \leq N_T - N$
		\item The resulting input $u(k)$ satisfies the hard input constraints \eqref{eq:input_constraints} 
		\item The predicted state sequence $x(\cdot|k)$ satisfies the constraints \eqref{eq:state_constraints} with a probability of at least $1 - \beta$.
	\end{enumerate}
\end{theorem}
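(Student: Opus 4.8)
The plan is to establish the three claims in turn, treating recursive feasibility first since it supports the others. For claim 1 I would use the shifted-candidate argument of tube MPC, adapted to the indirect-feedback initialization. Assume the problem is feasible at time $k$ with optimizer $(z^*(\cdot|k), v^*(\cdot|k), \tau^*, \lambda^*, s^*, \Theta^*)$. Applying the control law \eqref{eq:control_input} and the nominal prediction dynamics \eqref{eq:pred_state_dyn} with $z(0|k) = z(k)$ gives the identity $z(k+1) = A z(k) + B v^*(0|k) + \bar{w}(k) = z^*(1|k)$, so the realized nominal state coincides with the first predicted nominal state. I would then build a candidate at $k+1$ by shifting, $\tilde{v}(t|k+1) = v^*(t+1|k)$ for $t = 0, \ldots, N-2$, and appending the terminal controller $\tilde{v}(N-1|k+1) = \pi_f(z^*(N|k))$. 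By Assumption \ref{assum:terminal} the resulting terminal state obeys $\tilde{z}(N|k+1) \in \mathbb{Z}_f$ and the appended input satisfies $\pi_f(z^*(N|k)) \in \mathbb{V}$, while the shifted inputs inherit $\tilde{v}(t|k+1) \in \mathbb{V}$ from optimality at $k$.

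The crucial observation, which resolves the unbounded-disturbance difficulty, is that every stage of the CVaR constraint carries its own slack $\theta_t$. Because the error is reinitialized to the realized value $\hat{e}_j(0|k+1) = e(k+1)$ and propagated under fresh samples, the error trajectories at $k+1$ do not coincide with the shifted trajectories from $k$; nevertheless, for any nominal $\bar{z}$ one can always pick $s_{i,j,t} = (\gamma_i(z(t|k+1) + \hat{e}_j(t|k+1)) + \tau_{i,t})_+$, set $\lambda_{i,t} = \Vert h_i \Vert_p$, and then choose $\theta_t$ large enough that the summation constraint defining $\mathbb{Z}_\Theta$ holds. Thus $\mathbb{Z}_\Theta$ imposes no genuine obstruction, feasibility of the candidate reduces to the nominal input and terminal conditions established above, and hence $z(k+1) \in \mathbb{Z}_N$. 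Claim 2 is then immediate: feasibility forces $v^*(0|k) \in \mathbb{V} = \mathbb{U} \ominus \mathcal{E}_u$, Assumption \ref{assum:input} gives $\pi(e(k)) \in \mathcal{E}_u$, and by the definition of the Pontryagin difference the applied input satisfies $u(k) = v^*(0|k) + \pi(e(k)) \in \mathbb{U}$.

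For claim 3 I would chain the risk-measure and ambiguity-set guarantees. Whenever a stage constraint is enforced with zero slack, Lemma \ref{lem:cvar} certifies $\sup_{\mathbb{Q} \in \mathbb{B}_\epsilon(\hat{\mathbb{P}}_{e(t|k)})} \text{CVaR}^{\mathbb{Q}}_{p_x^i}(\gamma_i(z(t|k)+e(t|k))) \leq 0$. Since the CVaR upper-bounds the VaR, this yields $\text{VaR}^{\mathbb{Q}}_{p_x^i}(\gamma_i) \leq 0$, equivalently $\mathbb{Q}(h_i^\top x(t|k) \leq 1) \geq p_x^i$, for every $\mathbb{Q}$ in the ball. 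Assumption \ref{assum:Wasserstein} then supplies, with confidence $1-\beta$ over the draw of the $N_s$ samples, containment of the true predictive error distribution in $\mathbb{B}_\epsilon(\hat{\mathbb{P}}_{e(t|k)})$; applying the preceding implication with $\mathbb{Q} = \mathbb{P}_{e(t|k)}$ delivers the chance constraint \eqref{eq:state_constraints} for the predicted state with probability at least $1-\beta$.

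I expect the main obstacle to be the honest treatment of the indirect-feedback error mismatch in claim 1: one must recognize that the shifted candidate's errors genuinely differ from those at the previous step, so recursive feasibility cannot be inherited stage-by-stage within the CVaR constraints and must instead rest entirely on the exactness of the penalty softening together with the terminal invariance of Assumption \ref{assum:terminal}. A secondary point worth stating carefully in claim 3 is that the $1-\beta$ guarantee is taken over the sampling of $\hat{\mathcal{W}}$, not over the disturbance realization, and that the enforced-stage argument applies to the predicted state under its conditional predictive distribution; stages absorbed by nonzero slack are exactly the strictly-infeasible ones excluded from the hard-constrained set $\mathbb{Z}$.
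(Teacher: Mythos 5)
Your proposal is correct and follows essentially the same route as the paper: the shifted candidate sequence with the terminal controller and terminal invariance (Assumption \ref{assum:terminal}) plus the slack softening for recursive feasibility, the Pontryagin tightening $\mathbb{V} = \mathbb{U} \ominus \mathcal{E}_u$ with Assumption \ref{assum:input} for the hard input constraints, and the chain ``Lemma \ref{lem:cvar} $\Rightarrow$ DR-CVaR $\Rightarrow$ DR-VaR'' combined with the Wasserstein confidence bound of Assumption \ref{assum:Wasserstein} for the probabilistic guarantee. The one place you diverge --- observing that the re-initialized error samples at $k+1$ need not coincide with the shifted ones from time $k$, so the shifted slacks alone do not literally certify membership in the softened set and one must instead invoke that the slacks $\theta_t$ can be enlarged arbitrarily --- is a point the paper's proof glosses over, and your treatment of it (together with your caveat that the $1-\beta$ guarantee is over the sampling of the dataset and pertains to stages enforced with zero slack) is the more careful one.
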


\section{Numerical example}
\label{sec:example}
\begin{figure}
	\centering
	\includegraphics[width=1\linewidth]{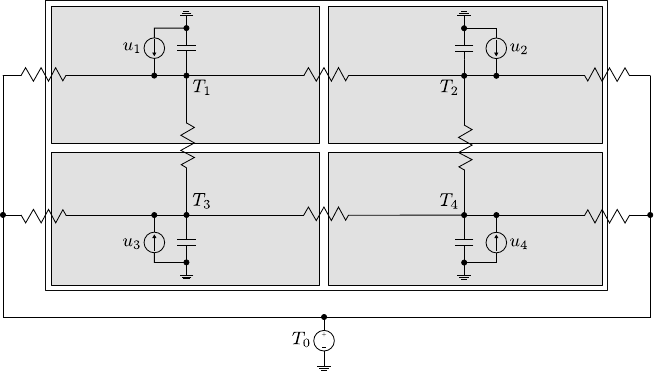}
	\caption{Representation of a four room building model as a resistor-capacitor network with states (temperatures) $T_i$ and inputs (heating/cooling power) $u_i$.}
	\label{fig:schematic}
\end{figure}
As an example we consider a temperature regulation task of a four room building model taken from \cite{hewing2020recursively}. The system is depicted in Figure \ref{fig:schematic} with a state vector $x = [T_1, T_2, T_3, T_4]$, where $T_i$ denotes the room temperature of each room $i = \{1, \ldots, 4\}$. The input vector $u$ consists of the four heat flows, i.e. the heating/cooling power of each HVAC unit, whereas the ambient temperature is given by $T_0$. We consider the following dynamics
\begin{align*}
	x(k+1) = A x(k) + B u(k) + B_w \bar{w}(k) + w(k),
\end{align*}
where the model parameters are taken from \cite{hewing2020recursively}. We model the mean ambient temperature as $\bar{w}(k) = 5 \sin((k+6)/4) + 19$ and the stochastic part as a zero-mean Gaussian process $w \sim \mathcal{N}(0, \Sigma)$, where 
$\Sigma_{ij} = 0.1 + 2 \exp( -(i-j)^2/60 )$ for all $i,j \in \{0,\ldots, N_T \}$. In total we collect $\bar{N} = 1000$ disturbance trajectories (Assumption \ref{assum:disturbance_set}).

The system is subject to hard input constraints on the cooling/heating power $\Vert u \Vert_\infty \leq 4.5$ and individual chance constraints on the room temperature
\begin{align*}
	&\mathbb{P}( x_i(k) \geq 20.4) \geq 0.9 \quad \forall i \in \{1, \ldots, 4\}, \\
	&\mathbb{P}( x_i(k) \leq 21.6) \geq 0.9 \quad \forall i \in \{1, \ldots, 4\}. 
\end{align*}
Starting from the initial condition $x(0) = [20.75 \quad  20.50  \quad 20.65  \quad 20.60]^\top$ we regulate the system to the setpoint $x_s = [21 \quad  21  \quad 21  \quad 21]^\top$ over a run-time of $N_T = 48$ hours.

{\textbf{Simulation setup}:}
We consider a stage cost composed of a weighted 2-norm for the states and a 1-norm for the control input $l_1(x) = \Vert x(\cdot|k) - x_s \Vert_Q$, $l_2(u) = R \Vert u(\cdot|k) \Vert_1$, where $Q = 0.01 I$ and $R = I$. For the CVaR constraints we set $\alpha_i = 1 - p_x^i = 0.3$ for all $i \in \{1, \ldots, 4\}$ and the prediction horizon is $N = 12$. For the Wasserstein constraint we select the $q$-norm as the $1$-norm, which implies that the $p$-norm is the $\infty$-norm.
We design the tube controller $\pi(\cdot)$ as an LQR with weights $Q_\pi = 10^3 I$ and $R_\pi = I$, which we saturated at $\pm 1$. For simplicity we set the terminal set to $\mathbb{Z}_f = \{ x_s \}$ and the terminal cost to $V_f(x) = 0$.

\textbf{Results}:
We carried out $1000$ Monte-Carlo simulations of the system with different noise realizations, $300$ are shown in Figure \ref{fig:temps}. It can be seen that for each disturbance realization, the hard input constraints are satisfied. In Table \ref{table} we compare for different Wasserstein radii $\epsilon$ and sample sizes $N_s$ the resulting worst-case empirical constraint satisfaction (largest in-time constraint violation). 
\begin{center}
	\begin{tabular}{llllll}
		\toprule
		$\epsilon$ & $N_s = 10$ & $N_s = 20 $ & $N_s = 50$ \\ \midrule
		$ 0 $      & $87.8 \% $ & $89.1 \%$ & $91.2 \%$  \\
		$ 10^{-5}$ & $89.5 \% $ & $90.4 \%$ & $92.5 \%$  \\ 
		$ 10^{-4}$ & $91.3 \% $ & $92.2 \%$ & $94.6 \%$   \\
		$10^{-3}$  & $93.1 \% $ & $95.2 \%$ & $97.1 \%$    \\
		\bottomrule
	\end{tabular} 
	\captionof{table}{Impact of Wasserstein radius and sample size on constraint satisfaction of $x_2 \geq 20.4$}
	\label{table}
\end{center}
It can be seen that with $\epsilon = 0$ the chance constraints are empirically violated for $N_s = 10$ and $20$, which underlines the statement that the SAA performs poorly for small sample sizes (Remark \ref{rem:SSA}). The chance constraint satisfaction rate can be increased by either the sample size $N_s$ (higher sample accuracy) or the Wasserstein radius $\epsilon$ (higher robustness against sampling errors). Furthermore, by increasing $N_s$ the Wasserstein radius can be decreased, while maintaining the chance constraint satisfaction level (Assumption \ref{assum:Wasserstein}).
\begin{figure}
	\centering
	\includegraphics[width=0.8\linewidth]{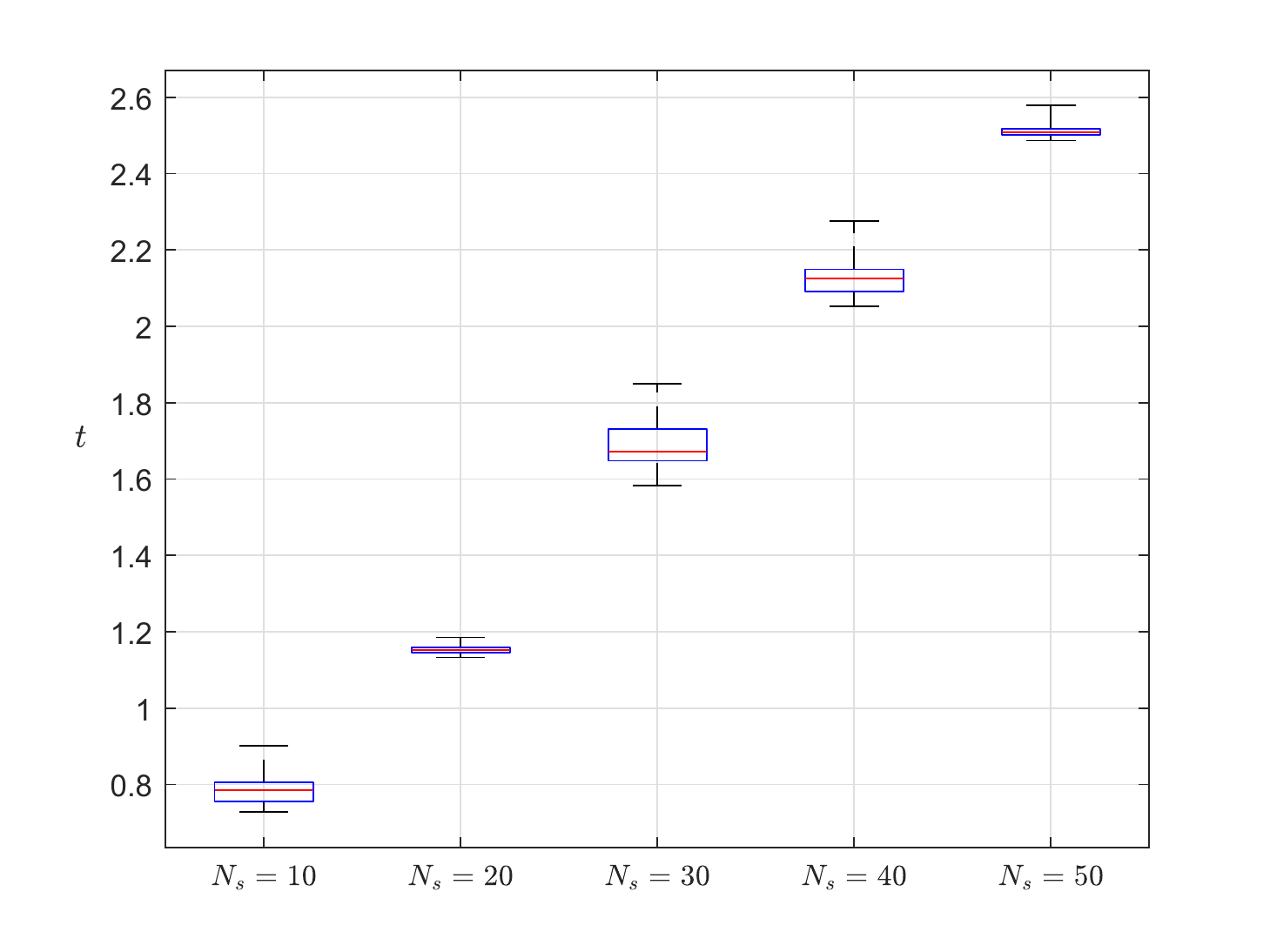}
	\caption{Average runtime $t$ in seconds for different sample sizes $N_s$ for $1000$ Monte-Carlo simulations. For $N_s = 50$ we implemented a SAA-based MPC and for $N_s < 50$ we selected $\epsilon = 10^{-4}$.}
	\label{fig:runtimes}
\end{figure}
In Figure \ref{fig:runtimes} we illustrate the effect of the sample size on the average computation time of the MPC optimization problem. We used CVX \cite{Grant} and ran the simulation on a desktop PC with an Intel i7-9700 CPU and 32gb ram. In order to satisfy the chance constraints empirically via a SAA, we require a sample size of $N_s = 50$, which is on average $3.2$ times slower compared to our distributionally robust approach with $N_s = 10$.
\begin{figure}
	\centering
	\includegraphics[width=0.9\linewidth]{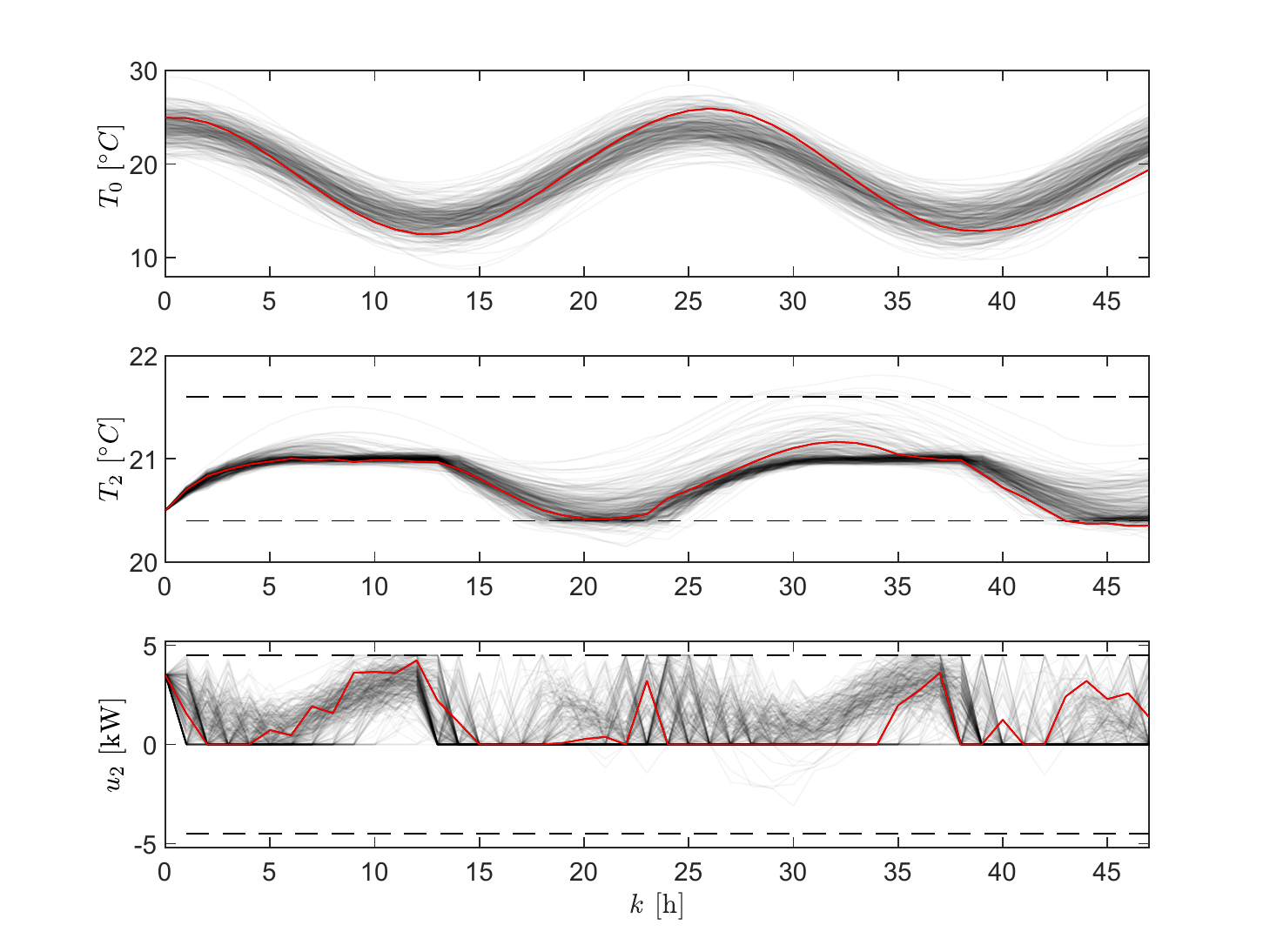}
	\caption{$300$ realizations of the ambient temperature (Top), Room temperature (Middle) and Heating/Cooling power (Bottom). The red lines depict one particular realization.}
	\label{fig:temps}
\end{figure}
\section{Conclusion}
This paper presented a data-driven indirect-feedback DR-SMPC scheme for additive correlated disturbances making use of a scenario-based tube formulation in conjunction with DR-CVaR constraints and Wasserstein ambiguity sets. The soft constrained formulation enabled us to show recursive feasibility, predictive state chance constraint and hard input constraint satisfaction. The effectiveness and computational advantages of our approach were demonstrated on a numerical example of a temperature regulation task.

\section*{Acknowledgement}
The authors would like to thank Peyman Mohajerin Esfahani for his useful comments on the first draft the paper.

\section*{Appendix}
\subsection{Proof of Lemma \ref{lem:cost}}
\label{proof:lemma}
We start by writing the state equation in explicit form under usage of linear superposition
\begin{align}
	x(t|k) &= z(t|k) + e(t|k) \nonumber \\
	&= A^{t} z(0|k) + \sum_{i = 0}^{t-1} A^{t-1-i} \bigg[ B v(i|k) + \bar{w}(i|k) \bigg]  \nonumber \\
	& + A_K^{t} e(0|k) + \sum_{i = 0}^{t-1} A_K^{t-1-i} w(i|k), \label{eq:explicit_states}
\end{align}
where we substituted the explicit form of the nominal state sequence \eqref{eq:pred_state_dyn} and \eqref{eq:explicit_error}. 
Similarly we can express the input equation \eqref{eq:pred_tube} as
\begin{align}
	u(t|k) = v(t|k) + K A_K^{t} e(0|k) + \sum_{i = 0}^{t-1} K A_K^{t-1-i} {w}(i|k). \label{eq:explicit_inputs}
\end{align}
As it can be seen by \eqref{eq:explicit_states}, the state sequence is an affine function of $[v, \bar{w}, w]$ and initial values $[z(0|k), e(0|k)]$, and the input sequence \eqref{eq:explicit_inputs} is an affine function of $[v, w]$ with initial value $e(0|k)$. 

Since the worst-case expectation problem \eqref{eq:dr_cost} is solved w.r.t. the linear disturbance $w$, we define the cost function
\begin{align*}
	\Phi(W(k)) = &V_f(w(0, \ldots, w(N-1|k)) \nonumber \\
	+ &l_1(w(0, \ldots, N-2|k)) \\
	+ &l_2(w(0, \ldots, N-2|k))
\end{align*}
where we neglected the arguments $z, v, e, \bar{w}$ for simplicity. Then we can rewrite \eqref{eq:dr_cost} as
\begin{align}
	\!\sup_{\mathbb{Q} \in \mathbb{B}_\epsilon(\hat{\mathbb{P}}_{W(k)})} \mathbb{E}_{\mathbb{Q}} \bigg \{ &\Phi(W(k)) \bigg \} . \label{eq:proof:dr_cost}
\end{align}
By definition, the functions $V_f$, $l_1$ and $l_2$ are proper, convex and Lipschitz continuous, so is $\Phi(\cdot)$, since it is a sum of nonnegative convex function \cite{boyd2004convex}. This allows us to apply \cite[Theorem 6.3]{esfahani2018data} to \eqref{eq:proof:dr_cost} resulting in
\begin{align}
	\kappa \epsilon + \frac{1}{N_s} \sum_{j=1}^{N_s} \bigg \{ \Phi(\hat{W}_j(k)) \bigg \}, \label{eq:phi_reformulation}
\end{align}
where $\kappa \coloneqq \sup_{\psi \in \mathbb{R}^{nN}} \{ \Vert \psi \Vert_{q,*} \: | \: \Phi^*(\psi) < \infty \}$.
Since $\Phi(\cdot)$ is Lipschitz continuous, there exists a Lipschitz constant $\phi > 0$, which, by \cite[Prop. 6.5]{esfahani2018data} bounds the steepness parameter $\kappa \leq \phi$. This implies that $\kappa$ is independent of any other variable and thus, the term $\kappa \epsilon$ can be neglected when optimizing over $z,v$.

It remains to re-substitute the definition of $\Phi(\hat{W}(k))$ together with the data dependent states and inputs \eqref{eq:explicit_states}-\eqref{eq:explicit_inputs} with $w(i|t) = \hat{w}_j(i|t)$ into \eqref{eq:phi_reformulation}, i.e.
\begin{align*}
	\frac{1}{N_s} \sum_{j=1}^{N_s}  \bigg \{ &V_f( \hat{x}_j(N|k)) + l_1(\hat{x}_j(\cdot|k)) + l_2(\hat{u}_j(\cdot|k))  \bigg \},
\end{align*}
which is equal to \eqref{eq:SAA_cost}. This concludes the proof. $\hfill\blacksquare$

\subsection{Proof of Lemma \ref{lem:cvar}}
Throughout the proof we neglect the time index $t$, half-space constraint index $i$ and abbreviate the $t$-step empirical predictive error distribution as $\hat{\mathbb{P}} = \hat{\mathbb{P}}_{\hat{e}(t|k)}$. By definition of the CVaR we can write \eqref{eq:DR-CVaR} as
\begin{align}
	\!\sup_{\mathbb{Q} \in \mathbb{B}_\epsilon(\mathbb{\hat{P}})} \!\inf_{\tau \in \mathbb{R}} \bigg( -\alpha \tau + \mathbb{E}_{\mathbb{Q}} \big \{ ( \gamma(z + e) + \tau )_+ \big \}  \bigg) \leq 0, \label{eq:proof:dr_cvar}
\end{align} 
which is attained by exchanging $\tau$ with $-\tau$ and multiplying by $\alpha > 0$. By the max-min inequality we can exchange the sup and inf
\begin{align}
	\!\sup_{\mathbb{Q} \in \mathbb{B}_\epsilon(\mathbb{\hat{P}})} \!\inf_{\tau \in \mathbb{R}} \bigg( -\alpha \tau + \mathbb{E}_{\mathbb{Q}} \big \{ ( \gamma(z + e) + \tau )_+ \big \}  \bigg) \nonumber \leq \\
	\!\inf_{\tau \in \mathbb{R}}  -\alpha \tau + \!\sup_{\mathbb{Q} \in \mathbb{B}_\epsilon(\mathbb{\hat{P}})} \mathbb{E}_{\mathbb{Q}} \big \{ (\gamma(z + e) + \tau )_+ \big \}. \label{eq:minmax}
\end{align}
Since $(\gamma(z + e) + \tau)_+ = \text{max}(0, \gamma(z + e) + \tau)$ is the nonnegative pointwise maximum of an affine function, it is proper, convex and lower semicontinuous \cite{boyd2004convex}. Thus, we can apply Lemma \ref{lem:convex} to express the supremum over $\mathbb{Q}$ as
\begin{align*}
	\inf_{\lambda \geq 0} \lambda \epsilon + \frac{1}{N_s} \sum_{j=1}^{N_s}  \sup_{e \in \mathbb{R}^n} \{ ( \gamma(z + e) + \tau )_+ - \lambda \Vert e - \hat{e}_j \Vert_q \}.
\end{align*}
To resolve the max-plus function we replace $\gamma(\cdot)$ with its definition and distinguish between the following two cases. Suppose $ \gamma(z + e) + \tau > 0$, then
\begin{align*}
	&\sup_{e \in \mathbb{R}^n} \big \{ h^\top (z + e) - 1 + \tau - \lambda \Vert e - \hat{e}_j \Vert_q \big \} \\
	=& \sup_{e \in \mathbb{R}^n} \big \{ h^\top ( z + e) - 1 + \tau - \sup_{\Vert \zeta_j \Vert_p \leq \lambda }\zeta_j^\top (e - \hat{e}_j) \big \} \\
	=& \inf_{\Vert \zeta_j \Vert_p \leq \lambda } \big \{h^\top z - 1 + \zeta_j^\top \hat{e}_j + \tau + \sup_{e \in \mathbb{R}^n} \{ (h^\top - \zeta_j^\top) e \}  \big\} \\
	\overset{\zeta_j = h}{=} &h^\top (z + \hat{e}_j) - 1 + \tau  = \gamma(z + \hat{e}_j) + \tau
\end{align*}
where the first equality uses the definition of the dual norm, the second equality the minimax theorem \cite[Prop. 5.5.4]{bertsekas2009convex} and the third equality carries out the supremum over $e$. In the fourth equality, the infimum is dropped because $h$ is not an optimization variable, which additionally requires the constraint $\Vert h \Vert_p \leq \lambda$. On the other hand, if $\gamma(z + e) + \tau \leq 0$, we have $(\gamma(z + e) + \tau)_+ = 0$ and thus
\begin{align*}
	\sup_{e \in \mathbb{R}^n} \{ - \lambda \Vert e - \hat{e}_j \Vert_q \} = \inf_{\Vert \zeta_j \Vert_p \leq \lambda } \sup_{e \in \mathbb{R}^n} \zeta_j^\top (\hat{e}_j -e) \} = 0.
\end{align*} 
In what follows we resort to an epigraph formulation and define for each sample $j$ an auxiliary variable $s_{j}$, such that $s_{j} \geq (  \gamma(z+\hat{e}_j) + \tau )_+$. After combining the above results we arrive at
\[
\eqref{eq:minmax} \leq	\left\{ \begin{array}{l}
	\displaystyle \!\inf_{\tau \in \mathbb{R}, \lambda \geq 0}  -\alpha \tau + \lambda \epsilon + \frac{1}{N_s} \sum_{j=1}^{N_s} s_j \leq 0\\
	\quad \text{s.t.} \quad (\gamma_i(z + \hat{e}_j) + \tau )_+ \leq s_{j}  \\
	\quad \quad \quad \hspace{0.2em} \Vert h \Vert_p \leq \lambda \\
	\quad \quad \quad \hspace{0.2em} \forall j = \{1, \ldots, N_s\}
\end{array}\right.
\]
which, after invoking \eqref{eq:proof:dr_cvar} for all halfspace constraints $i \in \{1, \ldots, r\}$ for each time step $t \in \{0, \ldots, N-1\}$, yields
\begin{align*}
	\left\{  \bar{z} \  \middle\vert \begin{array}{l}
		\displaystyle \inf_{\tau \in \mathbb{R}, \lambda \geq 0} -\alpha_i \tau_{i,t} + \epsilon \lambda_{i,t} + \frac{1}{N_s} \sum_{j=1}^{N_s} s_{i,j,t} \leq 0\\
		(\gamma_i(z(t|k) + \hat{e}_j(t|k)) + \tau_{i,t})_+ \leq s_{i,j,t}  \\
		\Vert h_i^\top \Vert_p \leq \lambda_{i,t} \\
		\forall j \in \{1, \ldots, N_s\} \: \forall i \in \{1, \ldots, r\} \\
		\forall t \in \{0, \ldots, N-1\}
	\end{array}\right\} \subseteq \mathbb{X}_{\text{CVaR}}.
\end{align*}
Similar to \cite[Prop. V.1]{hota2019data} the infimum can be replaced with the existence of variables $\tau, \lambda, s$ satisfying the constraints if and only if the infimum constraint holds true. The "$\Rightarrow$" part can be split into two cases: (i) If the infimum is achieved, then the optimizer satisfies the constraints. (ii) If the infimum is not achieved, then it is $-\infty$ and the first constraint is trivially satisfied. Thus we can find variables $\tau, \lambda, s$ that satisfy the remaining constraints. The "$\Leftarrow$" part is obvious.  \\
\vspace{0.1em} $\hfill \blacksquare$

\subsection{Proof of Theorem \ref{thm:feasible}}
\subsubsection{Recursive feasibility}
Assume that at time $k$ a feasible solution to the MPC problem \eqref{eq:DR_MPC} exists, i.e. $v^*(t|k)$ with slacks $\theta^*(t|k)$ for all $t \in \{0, \ldots, N-1\}$ and states $z^*(t|k)$ for all $t \in \{0, \ldots, N\}$. Applying the control input \eqref{eq:control_input} to system \eqref{eq:dynamic} results in the state $x(k+1)$ and $z(k+1) = z^*(1|k)$, for which we consider the shifted candidate sequence $\tilde{v}(\cdot|k+1) = [v^*(1|k), \ldots, v^*(N-1|k), \pi_f(z^*(N|k))]$ resulting in $\tilde{z}(\cdot|k+1) = [z^*(1|k), \ldots, z^*(N|k), z(N+1|k)]$ with $z(N+1|k) = A z^*(N|k) + B \pi_f(z^*(N|k)) + \bar{w}(k + N)$ and $\tilde{\theta}(\cdot|k+1) = [\theta^*_1, \ldots, \theta^*_{N-1}, 0]$. The zero is appended for the shifted slack variables, since at time $t = N-1$ the state lies in the terminal set.
Since $\tilde{v}(t|k+1) \in \mathbb{V}$ for $t \in \{ 0, \ldots, N-2 \}$ and $\tilde{v}(N-1|k+1) = \pi_f(z^*(N|k)) \in \mathbb{V}$ by Assumption \ref{assum:terminal}, we have that the input constraints are satisfied in prediction. Similarly, we have that $\tilde{z}(0,\ldots, N-1|k+1) \in \mathbb{Z}_{\tilde{\theta}(\cdot|k+1)}$ is satisfied due to the shifted optimal state and slack variables, and $\tilde{z}(N|k+1) = z(N+1|k) \in \mathbb{Z}_f$ by Assumption \ref{assum:terminal}, which verifies the state constraints in prediction.
\subsubsection{Input constraints}
Input constraint satisfaction for $u(k)$ follows immediately from \eqref{eq:control_input}, recursive feasibility and constraint tightening (Assumption \ref{assum:input}), i.e. $u(k) \in \mathbb{U}$, since $\pi(e) \in \mathcal{E}_u \hspace{0.5em} \forall e \in \mathbb{R}^n$.
\subsubsection{Probabilistic state constraint guarantee}
Feasibility at time $k$ implies
\begin{multline*}
	\mathbb{P}_{E(k)}^{N_s} \bigg ( z(0,\ldots, N|k) \in \mathbb{Z}_{\theta(0, \ldots, N-1|k)} \times \mathbb{Z}_f \bigg ) \\
	\geq \mathbb{P}_{E(k)}^{N_s} \bigg ( z(0,\ldots, N|k) \in \mathbb{Z} \times \mathbb{Z}_f \bigg )
	\geq 1 - \beta,
\end{multline*}
where the first inequality holds since $\theta_t \geq 0$ for all $t \in \{0, \ldots, N-1\}$ and the second inequality follows from \cite[Thm. 3.5]{esfahani2018data} (Assumption \ref{assum:Wasserstein}). 
This verifies the DR-CVaR state constraints \eqref{eq:DR-CVaR} for $0 \leq t \leq N$ and for all $i \in \{1, \ldots, r\}$ with a probability of at least $1-\beta$ w.r.t. the $N_s$ fold conditional predictive error distribution $\mathbb{P}^{N_s}_{E(k)}$. By definition the DR-CVaR majorizes the distributionally robust VaR \eqref{eq:DR-VaR}, which implies that the chance constraints of level $p_{x,i}$ are satisfied in prediction with a probability of at least $1-\beta$.
$\hfill\blacksquare$

\end{document}